\DeclareMathOperator{\n1}{\mathbbm{1}}
\DeclareMathOperator{\Ker}{Ker}
\DeclareMathOperator{\Sp}{Sp}
\DeclareMathOperator{\C}{\mathbb{C}}
\DeclareMathOperator{\R}{\mathbb{R}}
\DeclareMathOperator{\N}{\mathbb{N}}
\DeclareMathOperator{\h}{\mathcal{H}}
\DeclareMathOperator{\K}{\mathcal{K}}
\DeclareMathOperator{\B}{\mathcal{B}}
\newtheorem{thm}{Theorem}
\newtheorem{lem}[thm]{Lemma}
\newtheorem{prop}[thm]{Proposition}
\newtheorem{cor}[thm]{Corollary}
\newtheorem{example}[thm]{Example}
\theoremstyle{definition}
\newtheorem{defn}[thm]{Definition}
\begin{document}

\title{Quasi-Hermitian pair and co-amenability}

\author{Bat-Od Battseren}
\email{batoddd@gmail.com}


\dedicatory{}


\begin{abstract}
We adapt the notion of quasi-Hermition group to the pairs $(G,H)$ of discrete group $G$ and its subgroup $H$. We show that a quasi-Hermitian pair is amenable in the sense of Eymard.
\end{abstract}

\maketitle
\setcounter{tocdepth}{1}
\section{Introduction}
Hermitian Banach $*$-algebras make an important class of $*$-algebras that extends the class of $C^*$-algebras. It has being an effectively elaborated in approximation theory, time-frequency analysis and signal process, and non-commutative geometry. In this paper, we are mainly focus on the (quasi-)Hermitian property of Banach $*$-algebras derived from a group.

Spectrum of a function in group algebra plays a crucial role in study of analytic group theory. For example, amenability is characterized by the fact that the spectrum of a normalized characteristic function has 1 as a spectral point in the reduced group $C^*$-algebra. The property of being Hermitian is a variant of such spectral study. A locally compact group $G$ is called \textit{Hermitian} (resp. \textit{quasi-Hermitian}) if the spectrum $\Sp(f,L^1(G))$ is in the real line  for all self-adjoint functions $f=f^*$ in $L^1(G)$ (resp. in $C_c(G)$). It was a long-standing conjecture that Hermitian groups are amenable until it was affirmatively proven in \cite{SW20} even for quasi-Hermitian groups. In this paper, we extend the notion of quasi-Hermition group to pairs $(G,H)$ of discrete group $G$ and its subgroup $H$ and adapt the Samei-Wiersma's results for quasi-Hermition pairs.

As amenability is characterized in many ways, it has many versions such as weak amenability, a-T-menability (also known as Haagerup property), Haagerup-Kraus's approximation property, co-amenability, etc. We are more interested in co-amenability in the sense of Eymard. Recall the original definition of amenability given by John von Neumann: A discrete group $G$ is \textit{amenable} if there is a $G$-invariant state on $\ell^\infty(G)$. Similarly, a subgroup $H$ of $G$ is \textit{co-amenable} in $G$ if there is a $G$-invariant state on $\ell^\infty (G/H)$. Our main result is as follows.

\begin{thm}\label{thm A}
If $(G,H)$ is quasi-Hermtian, then $H$ is co-amenable in $G$.
\end{thm}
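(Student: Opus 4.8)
The plan is to argue by contraposition: assuming $H$ is \emph{not} co-amenable in $G$, I will manufacture a self-adjoint $f=f^*\in C_c(G)$ whose spectrum fails to lie on the real axis $\R$, so that $(G,H)$ cannot be quasi-Hermitian.

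First I would translate co-amenability into representation-theoretic terms. Co-amenability of $H$ in $G$ is equivalent to the weak containment $1_G\prec\lambda_{G/H}$ of the trivial representation in the quasi-regular representation $\lambda_{G/H}$ on $\ell^2(G/H)$, and, via Reiter's condition, to the existence of an approximately invariant net of positive probability vectors in $\ell^1(G/H)$. Negating this and invoking a relative version of the Kesten--Day criterion for the $G$-space $G/H$, I obtain a symmetric, finitely supported probability measure $\mu=\mu^*\geq 0$ on $G$ exhibiting a spectral gap: $\|\lambda_{G/H}(\mu)\|=r<1$ (if every such $\mu$ had quasi-regular norm $1$, the usual averaging argument would produce almost invariant probability vectors and hence co-amenability). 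On the other hand, each convolution power $\mu^{*n}$ is again a probability measure, so $\|\mu^{*n}\|=1$ in the $\ell^1$-type Banach $*$-algebra $\mathcal{B}$ underlying the definition of the pair; hence the $\mathcal{B}$-spectral radius $\mathrm{spr}_{\mathcal{B}}(\mu)$ equals $1$. Thus $\mu$ already realizes the maximal discrepancy $r<1=\mathrm{spr}_{\mathcal{B}}(\mu)$ between the quasi-regular norm and the $\mathcal{B}$-spectral radius.

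The heart of the argument is to upgrade this discrepancy into a genuinely non-real spectral point, following the strategy of \cite{SW20} but transported from $\lambda_G$ to $\lambda_{G/H}$. The robust criterion I would use is that a non-real point in $\Sp(f,\mathcal{B})$ forces the one-parameter family $\exp(itf)$ to grow exponentially in $|t|$ (since $\Sp(\exp(itf))=\exp(it\,\Sp(f,\mathcal{B}))$ and the spectrum of a self-adjoint $f$ is symmetric about $\R$), while conversely exhibiting such exponential growth certifies a non-real spectral point; equivalently, by Pt\'ak--Shirali--Ford, Hermitianness would force $\mathrm{spr}_{\mathcal{B}}(f)$ to agree with the enveloping $C^*$-seminorm of $f$. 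I would therefore use the gap $r<1$ to build, from $\mu$ and suitably chosen group translates, a self-adjoint $f\in C_c(G)$ for which the $\mathcal{B}$-norms of $\exp(itf)$ — equivalently, the $\ell^1(G/H)$-operator norms of high powers of a resolvent of $\mu$ — grow exponentially in $|t|$, even though $\exp(itf)$ is unitary and hence norm one for $\lambda_{G/H}$. Such an $f$ witnesses $\Sp(f,\mathcal{B})\not\subseteq\R$, contradicting quasi-Hermitianness and completing the contrapositive.

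The main obstacle is precisely this last conversion. The gap $r<1$ lives at the level of $\lambda_{G/H}$, and passing to that representation can only \emph{shrink} spectra, so one cannot read off a non-real point of $\Sp(f,\mathcal{B})$ directly from $\lambda_{G/H}(f)$. The delicate part, where the quantitative estimates of \cite{SW20} must be adapted to the homogeneous space $G/H$, is to control these \emph{global} quantities and to show that the failure of approximate invariance of probability vectors in $\ell^1(G/H)$ forces the $\ell^1$-type norms of exponentials (or resolvent powers) of the constructed $f$ to grow exponentially. Keeping the construction inside $C_c(G)$, and verifying that the enveloping $C^*$-seminorm of $\mathcal{B}$ relevant to the Pt\'ak--Shirali--Ford identity is indeed governed by $\lambda_{G/H}$, are the remaining technical points.
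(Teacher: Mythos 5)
There is a genuine gap, and it sits exactly where you flag ``the main obstacle.'' Your contrapositive set-up is fine: non-co-amenability of $H$ does give, via the Kesten--Day argument for the quasi-regular representation, a finitely supported symmetric probability measure $\mu=\mu^*$ with $\|\lambda_{G/H}(\mu)\|=r<1$ while $r([\mu],PF^*_1(G:H))=1$ (positivity of $\mu$ makes all powers have norm $1$ on $\ell^1(G/H)$ and $\ell^\infty(G/H)$). But the step you then need --- that this discrepancy between the $PF^*_1$-spectral radius and the \emph{reduced} norm $\|\lambda_{G/H}(\cdot)\|$ forces a non-real spectral point --- is not a ``remaining technical point''; it is the entire content of the paper's Theorem \ref{thm char q-Her}, (ii)$\Rightarrow$(iii). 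The Pt\'ak--Shirali--Ford identity you invoke only says that Hermitianness forces $r(f,\mathcal{B})=\|f\|_{C^*(\mathcal{B})}$, the norm in the \emph{universal} enveloping $C^*$-algebra of $\mathcal{B}=PF^*_1(G:H)$. That gives no contradiction here: the augmentation character $\sigma([f])=\sum_x f(x)$ is a bounded one-dimensional $*$-representation of $PF^*_1(G:H)$ (as computed in Proposition \ref{thm char co-amen}), so $\|\mu\|_{C^*(\mathcal{B})}=1=r([\mu],\mathcal{B})$ holds for your probability measure \emph{regardless} of co-amenability or Hermitianness. The contradiction can only come from identifying $\|\cdot\|_{C^*(\mathcal{B})}$ with $\|\lambda_{G/H}(\cdot)\|$ on self-adjoint elements of $[\C G]$, and that identification is precisely what the spectral interpolation through the scale $PF^*_p(G:H)$, $1\le p\le 2$ (Proposition \ref{prop PF spectral interpolation} together with \cite[Theorem 3.4]{SW20} and the $\varepsilon$-$n$ estimate in the proof of Theorem \ref{thm char q-Her}) is built to deliver. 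Your proposal never constructs this scale or proves the interpolation inequality, so the exponential-growth criterion for $\exp(itf)$ has nothing to bite on.

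Modulo that missing core, your route and the paper's are the same argument read in opposite directions: the paper goes quasi-Hermitian $\Rightarrow$ invariant spectral radius on $(PF^*_1(G:H),C^*_{\lambda_{G/H}}(G))$ $\Rightarrow$ $C^*(PF^*_1(G:H))\cong C^*_{\lambda_{G/H}}(G)$ $\Rightarrow$ the character $\sigma$ extends to a $G$-invariant state on $\ell^\infty(G/H)$, whereas you would replace the last step by its Kesten-type contrapositive. That substitution is harmless; the unproved middle is not.
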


The notion of quasi-Hermitian pair extends the usual notion of quasi-Hermitian group. In particular, if we take $H$ to be the trivial subgroup, we recover Samei-Wiersma's result for discrete groups. Indeed, $G$ is quasi-Hermitian if and only if $(G,\{e\})$ is quasi-Hermitian, and $G$ is amenable if and only if $\{e\}$ is co-amenable in $G$. This extension allows us to target a richer class.

About the proof of Theorem \ref{thm A}, we essentially follow the steps of \cite{SW20}. The first step is that we construct Banach $*$-algebras $PF^*_p(G:H)$ associated to the pair $(G,H)$ and $1\leq p\leq 2$. It generalizes the usual Banach $*$-algebras $PF^*_p(G)$. When $p=2$, we have $PF^*_p(G:H)=C^*_{\lambda_{G/H}}(G)$.  The second step is to see that  the triple 
\begin{align*}
(PF^*_{p_1}(G:H),PF^*_{p_2}(G:H),PF^*_{p_3}(G:H))
\end{align*} is a spectral interpolation of $*$-semisimple Banach $*$-algebras relative to $PF^*_1(G:H)$ for any $1\leq p_1<p_2< p_3\leq 2$. Then in Theorem \ref{thm char q-Her}, we will prove that $(G,H)$ is quasi-Hermitian if and only if the spectrum of $[f]\in [\C G]$ in $PF^*_p (G:H)$ does not depend on $1\leq p\leq 2$. This latter was proven in \cite{Bar90} for amenable Hermitian groups, during which time it was unknown that Hermitian groups are amenable. The third step is to see that if the universal $C^*$-enveloping algebra of $PF^*_1(G,H)$ is canonically isomorphic to $C^*_{\lambda_{G/H}} (G)$, then $H$ is co-amenable in $G$. Then Proposition 2.6 of \cite{SW20} concludes our proof.

\section{Preliminaries}
We recall some definitions and useful results.
\subsection{Banach $*$-algebras} Let $A$ be a Banach $*$-algebra and let $S$ be a $*$-subalgebra of $A$. For $a\in A$, we denote by $\Sp(a,A)$ the spectrum of $a$ in $A$, and by $r(a,A)$ the spectral radius of $a$ in $A$. Denote by $S_h$ the self-adjoint elements of $S$. We say that $S$ is \textit{quasi-Hermitian} in $A$ if $
\Sp(a,A)\subseteq \R$  for all $a\in S_h$.
 Similarly, we say that $S$ is \textit{quasi-symmetric} in $A$ if
$\Sp(a^*a,A)\subseteq \R_+$  for all $a\in S$.
Banach $*$-algebra $A$ is called \textit{Hermitian} (resp. \textit{symmetric}) if it is quasi-Hermitian (resp. quasi-symmetric) in itself. Shirali-Ford's theorem states that a Banach $*$-algebra is Hermitian if and only if it is symmetric (see for example \cite[Theorem 41.5]{BD73}).

By a \textit{$*$-representation} of $A$, we understand a pair $(\pi,\h)$ where $\h$ is a Hilbert space and $\pi:A\rightarrow \B(\h)$ is a $*$-homomophism into bounded operators on $\h$. It is a well known fact that any $*$-representation is norm decreasing. We say that $A$ is \textit{$*$-semisimple} if for any non-zero element $a\in A$, there is a $*$-representation that sends $a$ into a non-zero element. In the sequel, $A$ is always assumed to be $*$-semisimple. For a $*$-representation $(\pi,\h)$ of $A$, we denote by $C^*_\pi(A)$ the completion of $\pi(A)$ in $\B(\h)$. Let  $(\pi,\h)$ and $(\rho,\K)$ be  $*$-representations of $A$. We say that $\pi$ \textit{weakly contains} $\rho$ and write $\rho\prec \pi$ if $\|\rho(a)\|\leq \|\pi(a)\|$ for all $a\in A$. We say that $\pi$ and $\rho$ are \textit{weakly equivalent} and write $\pi\sim \rho$ if they weakly contains each other. We say that $\pi$ and $\rho$  are \textit{isomorphic} and write $\pi\cong\rho$ if there is a unitary operator $U\in \B(\h,\K)$ such that $\pi(a) = 
U^*\rho(a)U$ for all $a\in A$. Note that for any family $\{(\pi_i,\h)\}$ of $*$-representations, their $\ell^2$-direct-sum gives a $*$-representation $(\pi,\h)$  such that $\pi_i\prec \pi$ for all $a\in A$. This yields the existence of the \textit{universal $*$-representation} $(\pi_u,\h_u)$ of $A$. This is the unique $*$-representation satisfying the following universal property: Any $*$-representation of $A$ uniquely factors through $C^*_{\pi_u}(G)$. The $C^*$-algebra $C^*(A) = C^*_{\pi_u}(G)$ is called the \textit{universal $C^*$-enveloping algebra} of $A$. Since $A$ is $*$-semisimple, we can see $A$ as a dense $*$-subalgebra of $C^*(G)$. If $A$ is Hermitian, then the inclusion $A\subseteq C^*(A)$ is spectral, i.e. 
\begin{align*}
\Sp(a,A)=\Sp(a,C^*(A))\quad \text{for all} \quad a\in A.
\end{align*} The converse is rather trivial. 

The main tool applied in \cite{SW20} was the notion of spectral interpolation which was inspired by \cite{Pyt82}. Let us explain it more in detail. We say that $A\subseteq B$ is a \textit{nested pair} of ($*$-semisimple) Banach $*$-algebras if $A$ and $B$ are ($*$-semisimple) Banach $*$-algebras and $A$ embeds continuously into $B$ as a dense $*$-subalgebra. A nested triple of ($*$-semisimple) Banach algebras is defined similarly. Let $S$ be a $*$-subalgebra of $A$. We say that $S_h$ has
\textit{invariant spectral radius} in $(A,B)$ if $r(a,A)=r(a,B)$ for all $a \in S$. We say that $S$ is
\textit{spectral subalgebra} of $(A,B)$ if  $\Sp(a,A)=\Sp(a,B)$ for all $a \in S$. Suppose that $A\subseteq B\subseteq C$ is a nested triple of Banach $*$-algebras and $S$ is a dense $*$-subalgebra of $A$. We say that $(A,B,C)$ is a \textit{spectral interpolation} of triple Banach $*$-algebras \textit{relative} to $S$ if there is a constant $0<\theta<1$ such that
\begin{align*}
r(a,B)\leq r(a,A)^{1-\theta} r(a,C)^{\theta}\quad \text{for all} \quad a\in S_h.
\end{align*}
The following two results are crucial to our main result.

\begin{prop}[{\cite[Proposition 2.6]{SW20}}]\label{prop 2.6}
Let $A\subseteq B$ be a nested pair of $*$-semisimple Banach $*$-algebras, and $S$ be a dense $*$-subalgebra of $A$. Suppose that $S_h$ has an invariant spectral  radius in $(A,B)$. Then $A$ and $B$ have the same universal $C^*$-enveloping algebra. In particular, if $B$ is a $C^*$-algebra, then $B$ is the universal $C^*$-enveloping algebra of $A$.
\end{prop}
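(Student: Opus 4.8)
The plan is to identify the two universal $C^*$-enveloping algebras by showing that the two universal $C^*$-seminorms agree on the dense subalgebra $A$. Write $\|a\|_{C^*(A)}=\sup_\pi\|\pi(a)\|$ for the supremum over all $*$-representations $\pi$ of $A$, and similarly $\|a\|_{C^*(B)}$; since $A$ embeds as a dense $*$-subalgebra of each enveloping algebra (this uses $*$-semisimplicity), it suffices to prove $\|a\|_{C^*(A)}=\|a\|_{C^*(B)}$ for all $a\in A$. Both seminorms are dominated by $\|\cdot\|_A$ (directly for the first, and via $\|a\|_{C^*(B)}\le\|a\|_B\le c\|a\|_A$ for the second), hence are $\|\cdot\|_A$-continuous, so it even suffices to check equality on the $\|\cdot\|_A$-dense subalgebra $S$. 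One inequality is immediate: every $*$-representation of $B$ restricts to a $*$-representation of $A$, so taking suprema gives $\|a\|_{C^*(B)}\le\|a\|_{C^*(A)}$ for $a\in A$.

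The substance is the reverse inequality, which I would obtain by \emph{extending} representations from $A$ to $B$. Fix a $*$-representation $(\pi,\h)$ of $A$. For $a\in S$ the element $a^*a$ lies in $S_h$, and since $\pi(a^*a)=\pi(a)^*\pi(a)$ is a positive operator on $\h$,
\[
\|\pi(a)\|^2=\|\pi(a^*a)\|=r(\pi(a^*a),\B(\h))\le r(a^*a,A)=r(a^*a,B)\le\|a^*a\|_B\le\|a\|_B^2,
\]
where the first inequality is the contraction of the spectral radius under the $*$-homomorphism $\pi$, the middle equality is the invariant spectral radius hypothesis applied to $a^*a\in S_h$, and the last two steps use $r(\,\cdot\,,B)\le\|\cdot\|_B$ together with the isometric involution on $B$. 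Thus $\pi|_S$ is $\|\cdot\|_B$-bounded; as $S$ is $\|\cdot\|_B$-dense in $B$, it extends by continuity to a bounded linear map $\tilde\pi\colon B\to\B(\h)$, which is again a $*$-homomorphism because multiplication and involution are continuous and $\tilde\pi$ agrees with the $*$-homomorphism $\pi$ on the dense set $S$. Since $\tilde\pi$ and $\pi$ both agree with each other on $S$ and are $\|\cdot\|_A$-continuous on $A$, we get $\tilde\pi|_A=\pi$, whence $\|\pi(a)\|=\|\tilde\pi(a)\|\le\|a\|_{C^*(B)}$ for every $a\in A$; taking the supremum over $\pi$ gives $\|a\|_{C^*(A)}\le\|a\|_{C^*(B)}$.

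Combining the two inequalities, $\|\cdot\|_{C^*(A)}$ and $\|\cdot\|_{C^*(B)}$ coincide on $A$, so the identity on $A$ extends to an isometric $*$-isomorphism of completions $C^*(A)\cong C^*(B)$; that is, $A$ and $B$ have the same universal $C^*$-enveloping algebra. For the final assertion, I would recall that a $C^*$-algebra is its own universal $C^*$-enveloping algebra (every $*$-representation is contractive and the identity representation is isometric), so if $B$ is a $C^*$-algebra then $C^*(B)=B$ and hence $B\cong C^*(A)$. The one delicate point, and the step where the hypothesis is indispensable, is the extension of $\pi$ from $A$ to $B$: a priori $\pi$ is only continuous for the larger norm $\|\cdot\|_A$, and it is precisely the invariant spectral radius condition that upgrades this to $\|\cdot\|_B$-continuity through the P\'ak-type estimate above. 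I expect the remaining verifications to be routine, namely that the spectral radius does not increase under a $*$-homomorphism and that $S$ is $\|\cdot\|_B$-dense in $B$ (which follows from $\|\cdot\|_B\le c\|\cdot\|_A$ and the density of $S$ in $A$).
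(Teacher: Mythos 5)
The paper states this proposition as an imported result (citing \cite[Proposition 2.6]{SW20}) and gives no proof of its own, so there is nothing internal to compare against; your argument is correct and is the standard one for this statement. The key step --- using $\|\pi(a)\|^2=r(\pi(a^*a),\B(\h))\le r(a^*a,A)=r(a^*a,B)\le\|a\|_B^2$ to extend any $*$-representation of $A$ across the $\|\cdot\|_B$-dense subalgebra $S$ to all of $B$, whence the two universal $C^*$-norms agree on $A$ --- is exactly how the cited result is obtained, and your handling of the routine points ($*$-semisimplicity making the $C^*$-seminorms into norms, density of $S$ in $B$, continuity forcing $\tilde\pi|_A=\pi$) is sound.
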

\begin{thm}[{\cite[Theorem 3.4]{SW20}}]\label{thm 3.4}
If $(A,B,C)$ is a spectral interpolation of triple $*$-semisimple Banach $*$-algebras relative to a quasi-Hermitian dense $*$-subalgebra $S$ of $A$, then $S_h$ has invariant spectral radius in $(B,C)$. In particular, we have the canonical $*$-isomorphism $C^*(B)\cong C^*(C)$.
\end{thm}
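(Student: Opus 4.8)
The plan is to prove the \emph{stronger} statement that $\Sp(a,B)=\Sp(a,C)$ for every $a\in S_h$; equality of spectral radii is then immediate, and the $*$-isomorphism is obtained by feeding this into Proposition \ref{prop 2.6}. Fix a self-adjoint $a\in S_h$ and, after passing to unitizations if necessary (all hypotheses persist), treat $A\subseteq B\subseteq C$ as unital. First I would record the two free inputs. Since the embeddings are continuous and dense, invertibility in a smaller algebra passes to a larger one, so
\[
\Sp(a,C)\subseteq\Sp(a,B)\subseteq\Sp(a,A);
\]
and because $S$ is quasi-Hermitian in $A$, the largest spectrum $\Sp(a,A)$ lies in $\R$, whence all three are compact subsets of $\R$. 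In particular it suffices to establish the reverse inclusion $\Sp(a,B)\subseteq\Sp(a,C)$.

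The crux is to make the spectral-radius interpolation inequality see the \emph{distance} from a real point to the spectra rather than only their outer radius. For this I would introduce, for $t\in\R$ and a large real constant $c$, the self-adjoint element $g_t:=c\cdot 1-(a-t\cdot 1)^2\in S_h$. By the spectral mapping theorem and the reality of the spectra, for each $X\in\{A,B,C\}$ one has $\Sp((a-t)^2,X)=\{(\lambda-t)^2:\lambda\in\Sp(a,X)\}\subseteq[0,\infty)$, whose minimum is $d_X(t)^2$, where $d_X(t):=\mathrm{dist}(t,\Sp(a,X))$. Choosing $c>\max\Sp((a-t)^2,A)$ makes the spectrum of $g_t$ strictly positive in all three algebras and converts that minimum into a spectral radius, $r(g_t,X)=c-d_X(t)^2>0$. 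Applying the interpolation inequality to $g_t\in S_h$ and linearizing the right-hand side by weighted AM--GM ($u^{1-\theta}v^\theta\le(1-\theta)u+\theta v$) gives
\[
c-d_B(t)^2=r(g_t,B)\le r(g_t,A)^{1-\theta}r(g_t,C)^\theta\le(1-\theta)\bigl(c-d_A(t)^2\bigr)+\theta\bigl(c-d_C(t)^2\bigr),
\]
so that, after cancelling $c$, $\ d_B(t)^2\ge(1-\theta)d_A(t)^2+\theta d_C(t)^2\ $ for every $t\in\R$.

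The conclusion falls out by evaluating at a spectral point. If $\lambda_0\in\Sp(a,B)$ then $d_B(\lambda_0)=0$, and since $\theta>0$ the last inequality forces $d_C(\lambda_0)=0$, i.e.\ $\lambda_0\in\Sp(a,C)$ (the spectrum being closed). Hence $\Sp(a,B)\subseteq\Sp(a,C)$, the two spectra coincide, and in particular $r(a,B)=r(a,C)$ for all $a\in S_h$; that is, $S_h$ has invariant spectral radius in $(B,C)$. Finally $S$, being dense in $A$ and $A$ dense in $B$, is a dense $*$-subalgebra of $B$, so Proposition \ref{prop 2.6} applied to the nested pair $B\subseteq C$ yields the canonical $*$-isomorphism $C^*(B)\cong C^*(C)$.

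I expect the only real obstacle to be locating the right element to interpolate. Plugging $a$ itself into the hypothesis compares only the outer radii: the three numbers $r(a,A)\ge r(a,B)\ge r(a,C)$ subject to the single constraint $r(a,B)\le r(a,A)^{1-\theta}r(a,C)^\theta$ do \emph{not} force $r(a,B)=r(a,C)$, so one genuinely needs a different test family. The decisive move is to interpolate the shifted squares $g_t$, which makes the inequality control the distance function $t\mapsto d_X(t)^2$ whose zero set is precisely the spectrum; here the quasi-Hermitian hypothesis is indispensable, since it is what makes the spectra real and endows $(a-t)^2$ with its transparent nonnegative spectrum. The remaining ingredients — the spectral mapping identity, the choice of $c$, and the unitization reduction — are routine.
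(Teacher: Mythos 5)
The paper does not actually prove this statement: Theorem \ref{thm 3.4} is quoted verbatim from \cite{SW20} and used as a black box, so there is no in-paper proof to compare against. Judged on its own terms, your argument is essentially correct and rather elegant: interpolating the shifted squares $g_t=c\cdot 1-(a-t\cdot 1)^2$ and linearizing with AM--GM turns the multiplicative radius inequality into the additive inequality $d_B(t)^2\ge(1-\theta)d_A(t)^2+\theta d_C(t)^2$, which (since $\theta>0$ and $\Sp(a,B)\subseteq\Sp(a,A)$ makes $d_A$ vanish wherever $d_B$ does) forces $\Sp(a,B)\subseteq\Sp(a,C)$. You correctly diagnose that feeding $a$ itself into the hypothesis is useless, and your appeal to Proposition \ref{prop 2.6} for the last sentence is exactly how the paper uses these results. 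You in fact prove the stronger conclusion $\Sp(a,B)=\Sp(a,C)$ on $S_h$, not just equality of radii.

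The one genuine weak point is the parenthetical ``after passing to unitizations if necessary (all hypotheses persist).'' The quasi-Hermitian, density and semisimplicity hypotheses do persist, but the spectral-interpolation inequality does not automatically extend from $S_h$ to $S_h+\R\cdot 1$: if $\Sp(s,A)=\{-1,1\}$, $\Sp(s,B)=\{-1,0.9\}$, $\Sp(s,C)=\{-1\}$, the inequality holds for $s$ with any $\theta$, yet $r(1+s,B)=1.9$ can exceed $r(1+s,A)^{1-\theta}r(1+s,C)^{\theta}=2^{1-\theta}$. Since $g_t\in S_h+\R\cdot 1$ rather than $S_h$ when $1\notin S$, your key step is unjustified in the general non-unital setting. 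This is harmless for every application in this paper (here $S=[\C G]$ contains the unit $[\delta_e]$, and the algebras $PF^*_p(G:H)$ are unital), and it is repairable within your own scheme: replace $g_t$ by genuine elements of $S_h$, e.g.\ $p_n(a)$ with $p_n(x)=(x/\lambda_0)\bigl(1-(x-\lambda_0)^2/c\bigr)^n$, which vanishes at $0$, equals $1$ at $\lambda_0$, stays bounded on $\Sp(a,A)$, and tends to $0$ uniformly on $\Sp(a,C)$ when $\lambda_0\notin\Sp(a,C)$; letting $n\to\infty$ in the interpolation inequality gives the same contradiction, and the excluded point $\lambda_0=0$ is irrelevant for spectral radii. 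You should either add the hypothesis $1\in S$ or make this substitution explicit.
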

\subsection{Group representation}
Let $G$ be a discrete group. A \textit{unitary representation} of $G$ is a pair $(\pi,\h)$ where $\h$ is a Hilbert space and $\pi:G\rightarrow \mathcal{U}(\h)$ is a group homomorphism into the group of unitary operators on $\h$. It can be extended linearly to the $*$-representation $\pi:\ell^1(G)\rightarrow \B(\h)$. Conversely, any $*$-representation of $\ell^1(G)$ restricts to a unitary representation of $G$. Thus the representation theory of $G$ and $\ell^1(G)$ coincide.

Let $H\leq G$ be a subgroup. Consider the regular representation 
\begin{align*}
\lambda_{G/H}:\ell^1 (G) \rightarrow \B (\ell^2 (G/H))
\end{align*} defined by
\begin{align*}\lambda_{G/H}(f)g(xH) =(f*g)(xH)= \sum_{y\in G} f(y)g(y^{-1}xH)
\end{align*} for all $f\in \ell^1 (G)$, $g\in \ell^2(G/H)$, and $x\in G$. The kernel of $\Ker = \ker (\lambda_{G/H})$ consists of the functions $g\in \ell^1 (G)$ such that 
$\sum_{h\in H}g(xhy) = 0$ for all $x,y\in G$. Let us denote
\begin{align*}
\ell^1(G:H)= \ell^1 (G )/ Ker .
\end{align*} Then the regular representation gives rise to the  faithful $*$-representation
\begin{align*}
\widetilde \lambda_{G/H}:\ell^1(G:H) \rightarrow \B (\ell^2 (G/H)).
\end{align*}
For $f\in\ell^1(G)$, denote by $[f]\in \ell^1(G:H)$ the corresponding class, and by $\widetilde{f}\in \ell^1(G/H)$ the function defined by $\widetilde{f}(xH)=\sum_{h\in H} f(xh)$ for all $x\in G$. Put
\begin{align*}
[\C G] = \{[f]\in\ell^1(G:H): f\in \C G\}.
\end{align*}
\begin{defn}\label{defn GH repn}
A \textit{$(G,H)$-representation} is a unitary representation $(\pi,\h)$ of $G$ such that $\pi(f)=0$ for all $f\in Ker$.
\end{defn}

Remark that $(\pi,\h)$ is a $(G,H)$ representation if and only if $(\pi,\h)$ is a $*$-representation of $\ell^1(G)$ such that $\pi(Ker)=\{0\}$ if and only if  $(\pi,\h)$ induces a $*$-representation of $\ell^1(G:H)$ if and only if  $(\pi,\h)$ induces a $*$-representation of  $[\C G]$.  We will denote by $C^*(G:H)$ the universal $C^*$-enveloping algebra of $\ell^1(G:H)$ (or equivalently $[\C G]$). If we denote by $(\pi_u,\h_u)$ the largest  $(G:H)$ representation of $G$ (in the sense of weak containment), then we have $C^*_{\pi_u} (G) \cong C^* (G:H)$.

\subsection{Pseudo-function algebras}
We will now construct an analogue of pseudo-function $*$-algebras for the pair $(G,H)$. Let $1\leq p\leq q\leq \infty$ be conjugate numbers, i.e. $p^{-1}+q^{-1}=1$. Consider the regular representation 
\begin{align*}
\lambda^p_{G/H}:\ell^1 (G) \rightarrow \B (\ell^p (G/H))
\end{align*} defined by
\begin{align*}\lambda^p_{G/H}(f)g(xH) =(f*g)(xH) \sum_{y\in G} f(y)g(y^{-1}xH)
\end{align*} for all $f\in \ell^1 (G)$, $g\in \ell^p(G/H)$, and $x\in G$. Since its kernel is independent of the choice of $1\leq p\leq \infty$, we get the faithful representation
\begin{align*}
\widetilde \lambda^p_{G/H}:\ell^1(G:H) \rightarrow \B (\ell^p (G/H)).
\end{align*}
We denote by 
\begin{align*}
[f]\in \ell^1(G:H)\mapsto\|f\|_{PF_p} := \|\lambda^p_{G/H}(f)\|
\end{align*} the induced norm on $\ell^1(G:H)$. For $[f]\in \ell^1(G:H)$, its $PF_p^*$-norm is given by 
\begin{align*}
\|f\|_{PF_p^*} = \max \{ \|f\|_{PF_p} , \|f\|_{PF_q} \}.
\end{align*}
Denote by $PF_p^*(G:H)$ the completion of $\ell^1(G:H)$ with respect to $PF_p^*$-norm. Note that $\|f\|_{PF_p} = \|f^*\|_{PF_q}$ for all $f\in \ell^1(G)$, and consequently $PF_p^*(G:H)$ becomes a unital Banach $*$-algebra. For instance, when $p=2$, we have  $PF_2^*(G:H)  \cong C^*_{\lambda_{G/H}} (G)$. The following lemma helps to understand the image of the algebras $PF_p^*(G:H)$.

\begin{lem}\label{lem injective}
Let $1\leq p_1\leq p_2\leq 2$. The identity map on $\ell^1(G:H)$ extends to the norm decreasing injective $*$-homomophisms
\begin{align*}
\ell^1(G:H)\rightarrow PF_1^*(G:H)\rightarrow PF_{p_1}^*(G:H)\rightarrow PF_{p_2}^*(G:H)\rightarrow C^*_{\lambda_{G/H}}(G).
\end{align*} In particular, these Banach $*$-algebras are $*$-semisimple.
\end{lem}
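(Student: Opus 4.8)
The plan is to establish the chain of maps by proving two distinct facts: first, that each identity map extends to a well-defined norm-decreasing $*$-homomorphism, and second, that the composite (or equivalently the rightmost map into $C^*_{\lambda_{G/H}}(G)$) is injective, which then forces every map in the chain to be injective. The $*$-semisimplicity follows as an immediate corollary once injectivity into the $C^*$-algebra $C^*_{\lambda_{G/H}}(G) = PF_2^*(G:H)$ is known, since the faithful representation $\widetilde\lambda_{G/H}$ on $\ell^2(G/H)$ separates points.

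First I would treat the norm-decreasing claim. For the maps between consecutive $PF^*$-algebras, this amounts to the inequality $\|f\|_{PF_{p_2}^*}\leq \|f\|_{PF_{p_1}^*}$ for $1\leq p_1\leq p_2\leq 2$, and for the final arrow $\|f\|_{PF_2}\leq \|f\|_{PF_{p_2}^*}$. The key input is a Riesz--Thorin-type interpolation: the convolution operator $\lambda^p_{G/H}(f)$ is bounded on $\ell^1(G/H)$ and on $\ell^\infty(G/H)$ with norms controlled by $\|f\|_{PF_1}=\|f\|_{PF_\infty}$, so by interpolation $\|\lambda^p_{G/H}(f)\|$ is a log-convex, hence monotone-toward-the-middle, function of $1/p$ on $[1/2,1]$. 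Recalling the definition $\|f\|_{PF_p^*}=\max\{\|f\|_{PF_p},\|f\|_{PF_q}\}$ with $q$ the conjugate exponent, the symmetry under $p\leftrightarrow q$ means each $PF_p^*$-norm is a symmetrized interpolation norm, and convexity gives $\|f\|_{PF_{p_2}^*}\leq\|f\|_{PF_{p_1}^*}$ precisely because $p_2$ lies closer to the self-conjugate point $p=2$. The leftmost arrow $\ell^1(G:H)\to PF_1^*(G:H)$ is norm-decreasing because $\|f\|_{PF_1}\leq \|f\|_1$ (the left regular action on $\ell^1(G/H)$ has operator norm at most the $\ell^1$-norm of $f$).

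The injectivity step is where I would focus the real work. It suffices to show the composite into $C^*_{\lambda_{G/H}}(G)$ is injective on the dense subalgebra $\ell^1(G:H)$ and extends injectively; but since the composite is exactly $\widetilde\lambda_{G/H}$, which the excerpt has already recorded as faithful on $\ell^1(G:H)$, the point is that injectivity of the \emph{completed} map is automatic once we know the rightmost target norm dominates the $C^*_{\lambda_{G/H}}$-norm, i.e.\ once we know $PF_2^*(G:H)\cong C^*_{\lambda_{G/H}}(G)$. Concretely, if $a\in PF_{p_2}^*(G:H)$ maps to $0$ in $C^*_{\lambda_{G/H}}(G)$, approximate $a$ by a sequence $[f_n]\in\ell^1(G:H)$ converging in $PF_{p_2}^*$-norm; the image sequence converges both to $0$ in $C^*_{\lambda_{G/H}}(G)$ and, via the norm-decreasing map, its image is the image of $a$, so $a=0$ provided the $PF_{p_2}^*$-norm controls the operator norm on $\ell^2(G/H)$, which is exactly the $p_2\leq 2$ case of the convexity above. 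The same argument run at each intermediate stage, combined with the factoring of each map through the next, upgrades injectivity backward along the whole chain.

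The main obstacle I anticipate is making the interpolation argument fully rigorous in the quotient setting $\ell^p(G/H)$ rather than on the group itself, since the convolution $\lambda^p_{G/H}$ acts on functions on the coset space while $f$ lives on $G$. One must check that the Riesz--Thorin theorem applies to the single operator-valued family $p\mapsto\lambda^p_{G/H}(f)$ acting on the interpolation scale $\ell^p(G/H)$, verifying that these are genuinely the complex-interpolation spaces and that $\lambda^p_{G/H}(f)$ is the restriction of one consistent linear map. A secondary subtlety is confirming that the symmetrization $\max\{\|\cdot\|_{PF_p},\|\cdot\|_{PF_q}\}$ preserves the monotonicity; this is where the identity $\|f\|_{PF_p}=\|f^*\|_{PF_q}$ noted before the lemma does the work, ensuring the $*$-structure is compatible so that the maps are genuine $*$-homomorphisms and the norms behave symmetrically under conjugation.
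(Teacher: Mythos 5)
Your treatment of the norm-decreasing part is essentially the paper's: complex (Riesz--Thorin) interpolation gives log-convexity of $p\mapsto\|\lambda^p_{G/H}(f)\|$ in $1/p$, and the symmetry $\|f\|_{PF_p}=\|f^*\|_{PF_q}$ makes the symmetrized norms monotone toward $p=2$. That part is fine.

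The injectivity step, however, contains a genuine gap, and it is exactly the step you declared ``automatic.'' You argue that if $a\in PF^*_{p_2}(G:H)$ maps to $0$ in $C^*_{\lambda_{G/H}}(G)$ then $a=0$ ``provided the $PF^*_{p_2}$-norm controls the operator norm on $\ell^2(G/H)$.'' But domination of the target norm by the source norm is precisely what makes the map \emph{well-defined and contractive}; it says nothing about its kernel. A sequence $[f_n]$ can be Cauchy in the larger $PF^*_{p_2}$-norm with nonzero limit while tending to $0$ in the smaller $C^*_\lambda$-norm --- nothing in your argument excludes this. (If the inequality went the other way, i.e.\ the $PF^*_{p_2}$-norm were controlled by the $\ell^2$ operator norm, injectivity would indeed be automatic, but then all these algebras would coincide with $C^*_{\lambda_{G/H}}(G)$.) Faithfulness of $\widetilde\lambda_{G/H}$ on the dense subalgebra $\ell^1(G:H)$ does not transfer to the completion for the same reason. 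The missing idea, which is what the paper's proof actually supplies, is a separating family of functionals that is continuous for \emph{every} $PF^*_p$-norm simultaneously: the matrix coefficients $[f]\mapsto\langle \lambda^p_{G/H}(f)\delta_{yH},\delta_{xH}\rangle=\sum_{h\in H}f(xhy)$, which are bounded by $\|f\|_{PF^*_p}$ for each $p$ and do not depend on $p$. Given $T$ in the kernel of $i_{p_1,p_2}$, one approximates $T$ by $[f_n]$ in the $PF^*_{p_1}$-norm, computes each matrix coefficient of $T$ as $\lim_n\sum_{h\in H}f_n(xhy)$, identifies this with the corresponding limit in the $p_2$-picture (which is $0$ since $[f_n]\to 0$ there), and concludes $T=0$ from totality of $\{\delta_{xH}\}$. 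Without some such $p$-independent separating family, injectivity of the connecting maps simply does not follow from the norm inequalities. Your closing paragraph locates the main difficulty in the interpolation machinery, but that part is routine; the real content of the lemma is the kernel argument you skipped.
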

\begin{proof}
Let $q_i$ be the conjugate of $p_i$, i.e. $p_i^{-1} + q_i^{-1}=1$. For any $f\in \ell^1(G)$, we have by complex interpolation
\begin{align*}
\|f\|_{PF^*_{p_1}} \geq \|\lambda_{G/H}^{p_1}(f)\|^{1-\theta} \|\lambda^{q_1}_{G/H}(f)\|^\theta  \geq \|\lambda^{p_2}_{G/H}(f)\|
\end{align*} where $\theta\in [0;1]$ is such that $p_2^{-1} = (1-\theta )p_1^{-1} +\theta q_1^{-1}$. Using the same inequality for $q_2$, we get $\|f\|_{PF^*_{p_1}} \geq \|f\|_{PF^*_{p_2}}$. In other words, the identity on $\ell^1(G:H)$ extends to a norm decreasing $*$-homomorphism $i_{p_1,p_2}:PF_{p_1}^*(G:H)\rightarrow PF_{p_2}^*(G:H)$. Take any $T\in\ker (i_{p_1,p_2})$. Then there is a sequence of functions $f_n\in \C G$ such that $[f_n]\rightarrow T$ in $PF_{p_1}^*$-norm and $[f_n]\rightarrow 0$ in $PF_{p_2}^*$-norm. Seeing $T$ as an operator on $\ell^{p_1}(G/H)$, we get
\begin{align*}
\langle T\delta_{yH},\delta_{xH}\rangle_{\ell^{p_1},\ell^{q_1}}  &=\lim_n \langle f_n*\delta_{yH},\delta_{xH}\rangle_{\ell^{p_1},\ell^{q_1}} 
\\
&= \lim_n \sum_{h\in H} f_n(xhy)
\\
&= \lim_n \langle f_n*\delta_{yH},\delta_{xH}\rangle_{\ell^{p_2},\ell^{q_2}} = 0
\end{align*} for all $x,y\in G$. Since $\{\delta_{xH}:x\in G\}$ is total in $\ell^p(G/H)$ for any $1\leq p<\infty$, we get $T=0$. This proves that $i_{p_1,p_2}$ is injective.
\end{proof}

\section{Main section}
\subsection{Quasi-Hermitian pair}
We are ready to give the definition of quasi-Hermitian pair.
\begin{defn}\label{defn q-Her} 
Let $G$ be a discrete group, and let $H$ be its subgroup. The pair $(G,H)$ is called \textit{quasi-Hermitian} (resp. \textit{quasi-symmetric}) if $[\C G]$ is quasi-Hermitian (resp. quasi-symmetric) in $PF^*_1(G:H)$. The group $G$ is \textit{Hermitian} (resp. \textit{symmetric}) if the pair $(G,\{e\})$ is quasi-Hermitian (resp. quasi-symmetric).
\end{defn}

\begin{example}
If $H$ has finite index in $G$, then the Banach $*$-algebras $PF^*_p(G:H)$ are finite dimensional and Hermitian. In particular, the pair $(G,H)$ is quasi-Hermitian.
\end{example}

\begin{example}\label{exam 1}
Suppose that $H$ is a non-amenable normal subgroup of $G$ such that $Q=G/H$ is quasi-Hermitian. Then $G$ is not quasi-Hermitian, and $(G,H)$ is quasi-Hermitian. Indeed, the quotient map $p:G \rightarrow Q$ induces $*$-isomorphism $\ell^1(G:H)\cong \ell^1(Q)$ that is stable on $[\C G]\cong \C Q$. Since $\ell^1(Q)$ is isometrically embedded in $\B(\ell^1(G/H))$ and $\B(\ell^\infty (G/H))$, we have $\ell^1(G:H)\cong \ell^1(Q) \cong PF^*_1(G:H)$. Thus, $(G:H)$ is quasi-Hermitian if and only if $[\C G]$ is quasi-Hermitian in $PF^*_1(G:H)$ if and only if $\C Q$ is quasi-Hermitian in $\ell^1(Q)$ if and only if $Q$ is quasi-Hermitian.
\end{example}

\begin{prop}\label{prop PF spectral interpolation}
Let $1\leq p_1< p_2<p_3\leq  2$. Then  
\begin{align*}
(PF^*_{p_1}(G:H),PF^*_{p_2}(G:H),PF^*_{p_3}(G:H))
\end{align*} is a spectral interpolation of triple Banach $*$-algebras relative to $PF^*_{p_1}(G:H)$.
\end{prop}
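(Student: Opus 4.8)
The plan is to express each spectral radius $r(a,PF^*_p(G:H))$ as the spectral radius of a convolution operator on $\ell^p(G/H)$, and then to derive the desired bound from the Riesz--Thorin theorem, in the same spirit in which complex interpolation is already used in the proof of Lemma \ref{lem injective}. The gain over that lemma is only that here we work with arbitrary powers $a^n$ and pass to a limit of $n$-th roots.

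I would fix a self-adjoint $a=a^*\in PF^*_{p_1}(G:H)$. Through the injective $*$-homomorphisms of Lemma \ref{lem injective}, $a$ determines bounded operators $\lambda^{p_j}_{G/H}(a)$ on $\ell^{p_j}(G/H)$ for $j=1,2,3$ (and on the conjugate spaces), all of which restrict to one and the same operator on the dense subspace spanned by $\{\delta_{xH}:x\in G\}$. The first step is the observation that on self-adjoint elements the $PF^*_p$-norm reduces to a single operator norm: since $\|b\|_{PF_p}=\|b^*\|_{PF_q}$ and every power $a^n=(a^n)^*$ is again self-adjoint, we have $\|a^n\|_{PF_p}=\|a^n\|_{PF_q}$, whence $\|a^n\|_{PF^*_p}=\|\lambda^p_{G/H}(a)^n\|$. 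Combining this with Gelfand's spectral-radius formula gives
\begin{align*}
r(a,PF^*_p(G:H))=\lim_n\|\lambda^p_{G/H}(a)^n\|^{1/n}=r\!\left(\lambda^p_{G/H}(a),\B(\ell^p(G/H))\right)
\end{align*}
for every $p\in[1,2]$; that is, the spectral radius in $PF^*_p(G:H)$ is that of the convolution operator $\lambda^p_{G/H}(a)$.

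Next I would choose $\theta$ by the interpolation relation $1/p_2=(1-\theta)/p_1+\theta/p_3$; since $p_1<p_2<p_3$ forces $1/p_1>1/p_2>1/p_3$, this $\theta$ lies strictly between $0$ and $1$. For each fixed $n$ the operator $\lambda_{G/H}(a^n)$ is bounded on $\ell^{p_1}(G/H)$ and on $\ell^{p_3}(G/H)$, so Riesz--Thorin interpolation yields
\begin{align*}
\|\lambda^{p_2}_{G/H}(a)^n\|\leq\|\lambda^{p_1}_{G/H}(a)^n\|^{1-\theta}\,\|\lambda^{p_3}_{G/H}(a)^n\|^{\theta}.
\end{align*}
Taking $n$-th roots and letting $n\to\infty$, the three factors converge to the three spectral radii computed above, and the estimate becomes $r(a,PF^*_{p_2})\leq r(a,PF^*_{p_1})^{1-\theta}r(a,PF^*_{p_3})^{\theta}$. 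As $a$ was an arbitrary self-adjoint element of $PF^*_{p_1}(G:H)$, this is exactly the spectral interpolation relative to $PF^*_{p_1}(G:H)$.

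The step I expect to require the most care is the justification that Riesz--Thorin may be applied to a single operator across the scale: one must verify that the $\ell^{p_1}$-, $\ell^{p_2}$- and $\ell^{p_3}$-realizations of $a^n$ are restrictions of one common operator on the finitely supported functions, rather than a priori unrelated bounded operators sharing only the same symbol. This consistency is precisely what the computation of the matrix coefficients $\langle a\,\delta_{yH},\delta_{xH}\rangle$ in the proof of Lemma \ref{lem injective} provides; once it is in hand, the convolution kernel of $a^n$ is interpolated directly and the remaining passage to spectral radii is routine.
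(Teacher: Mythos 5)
Your argument is correct and is essentially the proof the paper intends: the paper disposes of the spectral radius inequality by calling it ``a well known consequence of complex interpolation'' (deferring to \cite[Proposition 4.5]{SW20}) and invokes Lemma \ref{lem injective} for the nesting, and what you have written is precisely that Riesz--Thorin argument carried out in detail, including the reduction $\|a^n\|_{PF^*_p}=\|\lambda^p_{G/H}(a)^n\|$ for self-adjoint $a$ and the consistency of the $\ell^{p_j}$-realizations on finitely supported functions. No gaps; you have simply made explicit what the paper leaves to the cited reference.
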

\begin{proof}
The proof is a verbatim of \cite[Proposition 4.5]{SW20}. The spectral radius inequality part is a well known consequence of complex interpolation, and Lemma \ref{lem injective} shows that $PF^*_{p_1}(G:H)\subseteq PF^*_{p_2}(G:H)\subseteq PF^*_{p_3}(G:H)$ is a nested triple of $*$-subalgebras.

\end{proof}

\begin{thm}\label{thm char q-Her} Let $G$ be a discrete group, and let $H\leq G$ be a subgroup.
The following are equivalent.
\begin{enumerate}[(i)]
\item $(G,H)$ is quasi-symmetric.
\item $(G,H)$ is quasi-Hermitian.
\item For all $f=f^*\in \C G$, we have $r([f],PF^*_1(G:H)) = \|\lambda_{G/H}(f)\|$.
\item For all $f=f^*\in \C G$,  $r([f],PF^*_p(G:H))$ is independent of $p\in [1,2]$.
\item For all $f\in \C G$, we have $\Sp ([f],PF^*_1(G:H)) = \Sp ([f],C^*_{\lambda_{G/H}}(G))$.
\item For all $f\in \C G$,  $\Sp ([f],PF^*_1(G:H))$ is independent of $p\in [1,2]$. 
\end{enumerate}
\end{thm}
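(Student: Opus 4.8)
The plan is to run a single cycle of implications, isolating one genuinely analytic step and reducing everything else to the $C^*$-identity $PF^*_2(G:H)\cong C^*_{\lambda_{G/H}}(G)$, the norm-decreasing chain of Lemma~\ref{lem injective}, and the monotonicity $\Sp([f],PF^*_2)\subseteq\Sp([f],PF^*_p)\subseteq\Sp([f],PF^*_1)$ (hence $r([f],PF^*_1)\ge r([f],PF^*_p)\ge r([f],PF^*_2)=\|\lambda_{G/H}(f)\|$ for $f=f^*$, since spectral radius equals norm on self-adjoint elements of a $C^*$-algebra). Using only these, I would first dispose of the ``cheap'' equivalences: (iii)$\Leftrightarrow$(iv) and (v)$\Leftrightarrow$(vi) follow by squeezing the nested chain between its two endpoints; (i)$\Rightarrow$(ii) follows by the real-shift trick (if $a=a^*\in[\C G]$ and $\alpha+i\beta\in\Sp(a,PF^*_1)$ with $\beta\neq0$, then applying quasi-symmetry to $a+t[\delta_e]$ for every real $t$ forces the imaginary part $2(\alpha+t)\beta$ of an element of $\R_+$ to vanish for all $t$, a contradiction unless $\beta=0$); and both (v)$\Rightarrow$(ii) and (v)$\Rightarrow$(i) are immediate from the $C^*$-facts that self-adjoint elements have real spectrum and that $[g]^*[g]$ has spectrum in $\R_+$ in $C^*_{\lambda_{G/H}}(G)$. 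This collapses the theorem to the single chain (ii)$\Rightarrow$(iii)$\Rightarrow$(v), which I break into an ``interior'' step and an ``endpoint'' step.

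For the interior step I would note first that (ii) self-improves: since $\Sp([f],PF^*_p)\subseteq\Sp([f],PF^*_1)\subseteq\R$ for $f=f^*$, the subalgebra $[\C G]$ is quasi-Hermitian in every $PF^*_p(G:H)$, $1\le p\le2$. Then for each fixed $p\in(1,2)$ I would apply Theorem~\ref{thm 3.4} to the spectral interpolation $(PF^*_1(G:H),PF^*_p(G:H),PF^*_2(G:H))$ of Proposition~\ref{prop PF spectral interpolation}, taken relative to the quasi-Hermitian dense $*$-subalgebra $[\C G]\subseteq PF^*_1(G:H)$. This yields invariant spectral radius of $[\C G]_h$ in the pair $(PF^*_p(G:H),PF^*_2(G:H))$, that is, $r([f],PF^*_p)=\|\lambda_{G/H}(f)\|$ for all $f=f^*$ and all $p\in(1,2]$.

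The endpoint step is where I expect the real difficulty. What remains for (iii) is the single equality $r([f],PF^*_1)=\|\lambda_{G/H}(f)\|$ at $p=1$, and here the interpolation inequality is useless: with $A=PF^*_1$ as the smallest algebra it only ever reproduces the trivial lower bound $r([f],PF^*_1)\ge\|\lambda_{G/H}(f)\|$, and $PF^*_1$ can never occur as the middle term of an interpolation triple, so Theorem~\ref{thm 3.4} cannot be invoked to include it. The hard part will therefore be to rule out ``extra'' spectrum of $[f]$ in $PF^*_1$ beyond the reduced radius. The approach I would take is a limiting/resolvent argument: for real $\lambda$ with $|\lambda|>\|\lambda_{G/H}(f)\|$ the resolvent $(\lambda-[f])^{-1}$ exists in $PF^*_p(G:H)$ for every $p\in(1,2]$, and I would try to show it persists in $PF^*_1(G:H)$, e.g.\ by viewing $p\mapsto(\lambda-[f])^{-1}$ as the boundary value of an analytic family along the complex-interpolation scale and controlling its $PF^*_1$-behaviour through quasi-Hermiticity at $p=1$; this would give $\lambda\notin\Sp([f],PF^*_1)$, hence $r([f],PF^*_1)\le\|\lambda_{G/H}(f)\|$ after letting $|\lambda|\downarrow\|\lambda_{G/H}(f)\|$. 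The absence of a resolvent-norm bound in terms of distance-to-spectrum in the non-$C^*$ algebras $PF^*_p$ is precisely what makes this step delicate.

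Finally, granting (iii), I would upgrade the self-adjoint spectral-radius equality to the full spectral permanence (v) and close the cycle. By Proposition~\ref{prop 2.6} applied to the nested pair $(PF^*_1(G:H),C^*_{\lambda_{G/H}}(G))$ with the invariant spectral radius just established, $C^*_{\lambda_{G/H}}(G)$ is the universal $C^*$-enveloping algebra of $PF^*_1(G:H)$. To pass from radii to spectra for an arbitrary $[f]\in[\C G]$, I would reduce to positive elements: for $\lambda\notin\Sp([f],C^*_{\lambda_{G/H}})$ the element $b=(\lambda-[f])^*(\lambda-[f])\in[\C G]_h$ satisfies $\Sp(b,C^*_{\lambda_{G/H}})\subseteq[c,\infty)$ for some $c>0$, while $\Sp(b,PF^*_1)\subseteq\R$ by (ii); comparing $r(b-t[\delta_e],PF^*_1)=r(b-t[\delta_e],C^*_{\lambda_{G/H}})$ for large real $t$ (an equality supplied by (iii) applied to the self-adjoint $b-t[\delta_e]\in[\C G]_h$) forces $0\notin\Sp(b,PF^*_1)$, so $b$ is invertible in $PF^*_1$ and hence $\lambda-[f]$ is left invertible there; symmetrically with $(\lambda-[f])(\lambda-[f])^*$ one gets right invertibility. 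This yields $\Sp([f],PF^*_1)=\Sp([f],C^*_{\lambda_{G/H}})$, i.e.\ (v), and then (v)$\Rightarrow$(i)$\Rightarrow$(ii) closes the equivalences.
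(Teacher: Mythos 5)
Your overall architecture is sound and, up to the endpoint step, matches the paper's: the cheap implications are handled essentially as in the paper, and your ``interior step'' (applying Theorem~\ref{thm 3.4} to the triple $(PF^*_1,PF^*_p,PF^*_2)$ relative to $[\C G]$ to get $r([f],PF^*_p)=\|\lambda_{G/H}(f)\|$ for $p\in(1,2]$) is exactly the paper's first move in proving $(ii)\Rightarrow(iii)$. Your closing step is a legitimate variant: where the paper simply cites the Barnes--Hulanicki theorem for $(iv)\Rightarrow(v)$, you reprove it by hand via the positive element $b=(\lambda-[f])^*(\lambda-[f])$ and the shift $b-t[\delta_e]$; that argument is correct (it is essentially the proof of Barnes--Hulanicki) and is self-contained, at the cost of also invoking $(ii)$ inside the step, which is harmless for the cycle.

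The genuine gap is exactly where you flagged it: the passage from $r([f],PF^*_p)=\|\lambda_{G/H}(f)\|$ for $p>1$ to the same equality at $p=1$. Your proposed mechanism --- treating $p\mapsto(\lambda-[f])^{-1}$ as an analytic family along the interpolation scale and ``controlling its $PF^*_1$-behaviour through quasi-Hermiticity'' --- is not an argument: the resolvents live in different completions, there is no a priori analyticity or uniform bound as $p\downarrow1$, and quasi-Hermiticity gives no resolvent-norm control in the non-$C^*$ algebra $PF^*_1$. The paper closes this gap by a concrete, purely quantitative estimate that your sketch does not contain: write $\|f^{(n)}\|_{PF^*_1}\leq(1-\varepsilon)^{-1}\|\lambda^1_{G/H}(f^{(n-1)})\widetilde{f*g}\|_{\ell^1(G/H)}$ for a suitable test function $g$ with $\|\widetilde g\|_{\ell^1}\leq1$, observe that $\lambda^1_{G/H}(f^{(n)})\widetilde g$ is supported on a finite set $[S]$ determined by the support of $f^{(n)}*g$, and apply H\"older to dominate the $\ell^1$-norm by $\|f^{(n-1)}\|_{PF^*_p}\,\|f\|_{PF^*_1}\,|S|^{1/q}$. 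The whole point is that the loss factor $|S|^{1/q}$ tends to $1$ as $p\to1$ (i.e.\ $q\to\infty$) for each fixed $n$, so that after taking $n$th roots, letting $p\to1$, and then $n\to\infty$, one gets $r([f],PF^*_1)\leq r([f],PF^*_{p_0})=\|\lambda_{G/H}(f)\|$. This finite-support/H\"older device (the Samei--Wiersma trick) is the one idea your proposal is missing, and without it the chain $(ii)\Rightarrow(iii)$ --- and hence the whole equivalence --- does not close.
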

\begin{proof}
The direction $(i)\Rightarrow (ii)$ is the easier part of Shirali-Ford's theorem (see \cite[Remark p.225]{BD73}). The directions
$(iii)\Rightarrow (iv)$, $(v)\Rightarrow (vi)$, $(vi)\Rightarrow (i)$ are clear. The direction $(iv)\Rightarrow (v)$ is thanks to Barnes-Hulanicki's Theorem \cite[p.329]{Bar90}.
It only remains to prove $(ii)\Rightarrow (iii)$. The proof is an adaptation of \cite[Proposition 4.7]{SW20}. Assume $(ii)$. 
Then $(PF^*_{1}(G:H),PF^*_{p}(G:H),C^*_{\lambda_{G/H}}(G))$ is a triple of spectral interpolation relative to the  quasi-Hermitian $*$-subalgebra $[\C G]\subseteq PF^*_{1}(G:H)$ for any $1<p<2$. Thus, 
\begin{align*}
r([f],PF^*_p(G:H))=\|\lambda_{G/H}(f)\|\quad \text{for all}\quad f\in \C G
\end{align*} by Theorem \ref{thm 3.4}. Take any $f=f^*\in \C G$. For any $n\in \N$ and $\varepsilon >0$, there exists $g\in \C G$ such that $\|\widetilde{g}\|_{\ell^1(G/H)}\leq 1$ and 
\begin{align*}(1-\varepsilon)
\|f^{(n)}\|_{PF^*_1(G:H)}  \leq
\|\lambda_{G/H}^{1} (f^{(n)})\widetilde{g} \|_{\ell^1(G/H)} = 
\|\lambda_{G/H}^{1} (f^{(n-1)})\widetilde{f*g} \|_{\ell^1(G/H)}.
\end{align*}
Let $S$ be a symmetric finite subset of $G$ containing the support of $f^{(n)}*g$. Then $\lambda^1_{G/H}(f^{(n)}) \widetilde{g}$ is supported on $[S] = \{sH\in G/H: s\in S\}$ and
\begin{align*}(1-\varepsilon)
\|f^{(n)}\|_{PF^*_1(G:H)}
&\leq\|\lambda_{G/H}^{1} (f^{(n-1)})(\widetilde{f*g}) \n1_{[S]}\|_{\ell^1(G/H)} 
\\
&\leq\|f^{(n-1)}\|_{PF_{p}^*(G:H)} \|\widetilde{f*g}\|_{\ell^p(G/H)} \|\n1_{[S]}\|_{\ell^q(G/H)}
\\
&\leq\|f^{(n-1)}\|_{PF_{p}^*(G:H)}  \|\widetilde{f*g}\|_{\ell^p(G/H)} |S|^{1/q}
\\
&\leq\|f^{(n-1)}\|_{PF_{p}^*(G:H)}  \|f\|_{PF^*_1(G:H)}  |S|^{1/q}.
\end{align*} Taking the $n$th root on  both sides and $\limsup$ over $p\rightarrow 1$, we get
\begin{align*}(1-\varepsilon)^{1/n}
\|f^{(n)}\|_{PF^*_1(G:H)}^{1/n} &\leq \limsup_{p\rightarrow 1}\|f^{(n-1)}\|_{PF_{p}^*(G:H)}^{1/n}  \|f\|_{PF^*_1(G:H)}^{1/n}
\\
&\leq \|f^{(n-1)}\|_{PF_{p_0}^*(G:H)}^{1/n}  \|f\|_{PF^*_1(G:H)}^{1/n} +\varepsilon
\end{align*} for some $1<p_0$. Now letting $n\rightarrow \infty$  and then $\varepsilon\rightarrow 0$, we get
\begin{align*}
r([f],PF_1^*(G:H)) \leq r([f],PF_{p_0}^* (G:H)) = \|\lambda_{G/H}(f)\|.
\end{align*} Since the converse inequality is always true, the above is equality.

\end{proof}

\subsection{Co-amenable subgroup}
Co-amenability was introduced in \cite{Eym72} for a pair $(G,H)$ of locally compact group $G$ and its closed subgroup $H$. It extends the usual amenability. Basic examples are when the quotient $G/H$ is compact, and when $H$ is normal and the quotient group $G/H$ is amenable. We recall the definition.
\begin{defn}\label{defn co-amen}
Let $G$ be a discrete group, and let $H\leq G$ be a subgroup. We say that $H$ is \textit{co-amenable} in $G$ if there exists a left $G$-invariant mean on $\ell^\infty (G/H)$.
\end{defn}

The usual amenability is characterized in many different ways. One of them is the canonical equality of the full group $C^*$-algebra $C^*(G)$ and the reduced group $C^*$-algebra $C^*_\lambda (G)$. A partial analogue of this result is observed for co-amenability.
\begin{prop}\label{thm char co-amen}
Let $G$ be a discrete group and let $H\leq G$ be a subgroup. Consider the following statements.
\begin{enumerate}[(i)]
\item The canonical map $C^*(G:H)\twoheadrightarrow C^*_{\lambda_{G/H}} (G)$ is injective.
\item The canonical map $C^*(PF^*_{1}(G:H))\twoheadrightarrow C^*_{\lambda_{G/H}} (G)$ is injective.
\item The subgroup $H$ is co-amenable in $G$.
\end{enumerate} We have $(i)\Rightarrow(ii)\Rightarrow (iii)$. If $H$ is normal, we have $(iii)\Rightarrow (i)$.
\end{prop}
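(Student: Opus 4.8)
The plan is to organise the argument around the canonical surjections arising from the dense inclusions of Lemma~\ref{lem injective}, and to pin down the trivial representation inside the resulting enveloping $C^*$-algebras. Every $*$-representation of $PF^*_1(G:H)$ restricts to a $(G,H)$-representation, and $\lambda_{G/H}$ is itself $PF^*_1$-bounded since $\|\cdot\|_{PF^*_2}\le\|\cdot\|_{PF^*_1}$ on $[\C G]$; hence the three universal norms satisfy $\|\cdot\|_{C^*_{\lambda_{G/H}}(G)}\le\|\cdot\|_{C^*(PF^*_1(G:H))}\le\|\cdot\|_{C^*(G:H)}$ on $[\C G]$, and we obtain canonical surjections
\begin{align*}
C^*(G:H)\twoheadrightarrow C^*(PF^*_1(G:H))\twoheadrightarrow C^*_{\lambda_{G/H}}(G)
\end{align*}
whose composite is the map in (i). Injectivity of the composite forces its first factor to be injective, hence bijective as it is also surjective; the second factor is then injective, so $(i)\Rightarrow(ii)$ is immediate.

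For $(ii)\Rightarrow(iii)$ I would locate the trivial representation $1_G\colon f\mapsto\sum_{g\in G}f(g)$ in $C^*(PF^*_1(G:H))$. First it is a $(G,H)$-representation: if $f\in\Ker$ then, putting $y=e$ in the defining relation $\sum_{h\in H}f(xhy)=0$, we get $\widetilde f(xH)=\sum_{h\in H}f(xh)=0$ for all $x$, so $1_G(f)=\sum_{xH}\widetilde f(xH)=0$. Next it is $PF^*_1$-bounded: since $\lambda^1_{G/H}(f)\delta_{eH}=\widetilde f$ in $\ell^1(G/H)$, pairing against the constant function $\mathbbm 1\in\ell^\infty(G/H)$ gives
\begin{align*}
1_G(f)=\sum_{xH}\widetilde f(xH)=\langle\lambda^1_{G/H}(f)\delta_{eH},\mathbbm 1\rangle ,
\end{align*}
whence $|1_G(f)|\le\|\lambda^1_{G/H}(f)\|\,\|\delta_{eH}\|_{\ell^1}\,\|\mathbbm 1\|_{\ell^\infty}=\|f\|_{PF_1}\le\|f\|_{PF^*_1}$. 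Therefore $1_G$ factors through $C^*(PF^*_1(G:H))$, and under (ii) the $PF^*_1$-universal norm agrees with the reduced norm on $[\C G]$, so $|1_G(f)|\le\|f\|_{C^*(PF^*_1(G:H))}=\|\lambda_{G/H}(f)\|$, i.e. $1_G\prec\lambda_{G/H}$. Weak containment of the trivial representation in the quasi-regular representation produces a net of almost-invariant unit vectors $\xi_i\in\ell^2(G/H)$, and any weak-$*$ cluster point of the states $\phi\mapsto\langle\phi\,\xi_i,\xi_i\rangle$ on $\ell^\infty(G/H)$ is a left $G$-invariant mean; this is exactly Eymard's criterion for co-amenability \cite{Eym72}, giving (iii).

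For $(iii)\Rightarrow(i)$ under normality I would pass to the quotient group $Q=G/H$. As in Example~\ref{exam 1}, normality yields a $*$-isomorphism $\ell^1(G:H)\cong\ell^1(Q)$ carrying $[\C G]$ onto $\C Q$, so that the $(G,H)$-representations are exactly the unitary representations of $Q$; consequently $C^*(G:H)\cong C^*(Q)$ and $C^*_{\lambda_{G/H}}(G)\cong C^*_\lambda(Q)$, and the map in (i) is identified with the canonical surjection $C^*(Q)\twoheadrightarrow C^*_\lambda(Q)$. Since $H$ acts trivially on $G/H=Q$, a left $G$-invariant mean on $\ell^\infty(G/H)$ is the same thing as a left invariant mean on $\ell^\infty(Q)$, so (iii) says precisely that $Q$ is amenable; the classical equality $C^*(Q)=C^*_\lambda(Q)$ for amenable groups then yields (i).

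I expect the middle implication $(ii)\Rightarrow(iii)$ to be the crux, and within it the passage from the $C^*$-algebraic hypothesis to an honest invariant mean. The two delicate points are verifying that the trivial representation is genuinely $PF^*_1$-bounded, so that (ii) may be applied to it at all, and invoking the almost-invariant-vector characterisation of co-amenability; the remaining implications are formal manipulations of enveloping $C^*$-algebras together with the reduction to the quotient in the normal case.
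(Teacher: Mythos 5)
Your proposal is correct, and two of its three implications coincide with the paper's own proof: $(i)\Rightarrow(ii)$ by factoring the composite surjection $C^*(G:H)\twoheadrightarrow C^*(PF^*_1(G:H))\twoheadrightarrow C^*_{\lambda_{G/H}}(G)$, and $(iii)\Rightarrow(i)$ by passing to the quotient group $Q=G/H$ and invoking $C^*(Q)=C^*_\lambda(Q)$ for amenable $Q$. For $(ii)\Rightarrow(iii)$ you and the paper both start from the same character $\sigma([f])=\sum_{x\in G}f(x)$ and the same estimate $|\sigma([f])|\le\|\widetilde f\|_{\ell^1(G/H)}\le\|f\|_{PF^*_1(G:H)}$, and both conclude under $(ii)$ that $\sigma$ is dominated by the reduced norm, i.e.\ $1_G\prec\lambda_{G/H}$. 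From there the routes diverge. You finish by the classical representation-theoretic path: weak containment of the trivial representation in the quasi-regular one yields a net of almost-invariant unit vectors in $\ell^2(G/H)$, whose vector states on $\ell^\infty(G/H)$ cluster to a $G$-invariant mean (Eymard's criterion). The paper instead stays inside operator algebras: it extends $\sigma$ to a state on $\B(\ell^2(G/H))$, observes that each unitary $\lambda_{G/H}(x)$ lies in the multiplicative domain of $\sigma$ because $\sigma(\lambda_{G/H}(x))=1$ has modulus one, deduces $\sigma(\lambda_{G/H}(x)T\lambda_{G/H}(x)^{-1})=\sigma(T)$ from the bimodule property, and restricts $\sigma$ to the multiplication operators $M_f$, $f\in\ell^\infty(G/H)$. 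Your version outsources the passage from weak containment to an invariant mean to a known equivalence --- note that the step from $1_G\prec\lambda_{G/H}$ to almost-invariant vectors is itself a nontrivial classical fact and should carry a citation --- whereas the paper's multiplicative-domain argument is self-contained and bypasses almost-invariant vectors entirely; both are valid, and what yours buys is a direct link to the standard Reiter-type characterizations of co-amenability.
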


\begin{proof}
The implication $(i)\Rightarrow (ii)$ follows from the surjective $*$-homomorphisms
\begin{align*}
C^*(G:H) \twoheadrightarrow
 C^*(PF^*_1(G:H))\twoheadrightarrow
 C^*_{\lambda_{G,H}}(G).
\end{align*}

Let us prove $(ii)\Rightarrow (iii)$. Suppose that the map $C^*(PF^{*}_1(G:H))\rightarrow C^*_{\lambda_{G/H}} (G)$ is injective, hence $*$-isomorphism. Consider the character $\sigma:[f]\in \ell^1(G:H)\mapsto \sum_{x\in G} f(x)$. Since we have 
\begin{align*}
|\sigma([f]) |= |\sum_{x\in G} f(x)| \leq \sum_{xH\in G/H} |\sum_{h\in H}f(xh)| = \|\lambda_{G/H}^1 (f)\delta_H\|_1 \leq \|f\|_{PF^*_1(G:H)}.
\end{align*} $\sigma$ extends to a character on $C^*(PF^*_1(G:H)) = C^*_{\lambda_{G/H}} (G)$. It also extends to a state on $\B(\ell^2(G/H))$ which we also denote by $\sigma$. Observe that
$
\sigma(\lambda_{G/H}(x)) =\sigma([\delta_x]) = 1
$ for all $x\in G$. In particular, the left translation operators  $\{\lambda_{G/H}(x):x\in G\}$ are in the multiplicative domain of $\sigma$, that is $
\sigma(\lambda_{G/H}(x)^*\lambda_{G/H}(x)) =\sigma(\lambda_{G/H}(x))^*\sigma(\lambda_{G/H}(x))$ and $
\sigma(\lambda_{G/H}(x)\lambda_{G/H}(x)^*) =\sigma(\lambda_{G/H}(x))\sigma(\lambda_{G/H}(x))^*$ for all $x\in G$. It follows from Bimodule Property (cf. \cite[Proposition 1.5.7]{BO08}) that 
\begin{align*}
\sigma(\lambda_{G/H}(x) T\lambda_{G/H}(x^{-1})) = \sigma(\lambda_{G/H}(x)) \sigma(M)\sigma (\lambda_{G/H}(x^{-1})) = \sigma(T)
\end{align*} for all $T\in \B(\ell^2 (G/H))$. Recall that the pointwise multiplication operators give faithful $*$-representation 
\begin{align*}
f\in\ell^\infty (G/H)\mapsto M_f\in \B(\ell^2 (G/H))
\end{align*} and we have $M_{\lambda_x f} = \lambda_{G/H}(x) M_f\lambda_{G/H}(x^{-1})$. Now, it is clear that $\sigma$ gives a $G$-invariant state on $\ell^\infty (G/H)$.

For the direction $(iii)\Rightarrow (i)$, when $H$ is normal and co-amenable in $G$, the quotient group $Q=G/H$ is amenable. Moreover, we have the canonical isomorphisms $\ell^1(Q)\cong \ell^1(G:H)$, $C^*(Q)\cong C^*(G:H)$, and $C^*_{\lambda}(Q)\cong C^*_{\lambda_{G/H}}(G)$. Thus $(iii)\Rightarrow (i)$ follows.
\end{proof}
Now, we are ready to prove Theorem \ref{thm A}.
\begin{proof}[Proof of Theorem \ref{thm A}] Assume that $(G,H)$ is a quasi-Hermitian pair. Then $[\C G]$ is spectral in $(PF^*_1(G:H),C^*_{\lambda_{G/H}} (G))$ by Theorem \ref{thm char q-Her}. Proposition \ref{prop 2.6}  yields that we have canonical $*$-isomorphism $C^*(G:H)\rightarrow C^*_{\lambda_{G/H}} (G)$. Then $H$ is co-amenable in $G$ by Proposition \ref{thm char co-amen}.
\end{proof}

\begin{cor}\label{cor1}
Quasi-Hermitian discrete groups are amenable.
\end{cor}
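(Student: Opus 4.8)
The plan is to derive the corollary directly from Theorem \ref{thm A} by specializing to the trivial subgroup $H=\{e\}$. The entire content then reduces to two dictionary identifications already announced in the introduction: that $G$ is quasi-Hermitian precisely when the pair $(G,\{e\})$ is quasi-Hermitian, and that $G$ is amenable precisely when $\{e\}$ is co-amenable in $G$. Once both are established, the chain ``$G$ quasi-Hermitian $\Rightarrow (G,\{e\})$ quasi-Hermitian $\Rightarrow \{e\}$ co-amenable in $G$ $\Rightarrow G$ amenable'' closes the argument, with the first implication being tautological and the middle one being Theorem \ref{thm A}.

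For the first identification, I would unwind the construction of $PF^*_1(G:H)$ at $H=\{e\}$. The kernel $Ker$ of $\lambda_{G/\{e\}}=\lambda_G$ consists of those $g$ with $g(xy)=0$ for all $x,y\in G$, hence is trivial, so $\ell^1(G:\{e\})=\ell^1(G)$ and $[\C G]=\C G$. Since the operator norm of left convolution by $f\in\ell^1(G)$ equals $\|f\|_{\ell^1(G)}$ on both $\ell^1(G)$ and $\ell^\infty(G)$ (the upper bound is the convolution inequality, and the lower bound follows by testing against $\delta_e$ in the $\ell^1$ case and against a unimodular function matching the phases of $f$ in the $\ell^\infty$ case), the $PF^*_1$-norm $\max\{\|f\|_{PF_1},\|f\|_{PF_\infty}\}$ coincides with the $\ell^1$-norm. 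Thus $PF^*_1(G:\{e\})=\ell^1(G)$ isometrically, and by Definition \ref{defn q-Her} the pair $(G,\{e\})$ is quasi-Hermitian exactly when $\Sp(f,\ell^1(G))\subseteq\R$ for every self-adjoint $f\in\C G$, which is the defining condition for $G$ to be quasi-Hermitian.

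For the second identification, I would observe that $G/\{e\}=G$, so $\ell^\infty(G/\{e\})=\ell^\infty(G)$, and that a left $G$-invariant mean on the former is literally a left $G$-invariant mean on the latter. By Definition \ref{defn co-amen}, $\{e\}$ is co-amenable in $G$ if and only if such a mean exists, which is von Neumann's original definition of amenability of $G$.

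I do not anticipate a genuine obstacle here, since all the analytic weight sits inside Theorem \ref{thm A}; the corollary is purely a verification that the pseudo-function machinery degenerates correctly when $H$ is trivial. The only point requiring minor care is confirming the isometric collapse $PF^*_1(G:\{e\})=\ell^1(G)$, so that the spectrum computed in the pair algebra is exactly the $\ell^1(G)$-spectrum of the classical definition and no spurious completion intervenes.
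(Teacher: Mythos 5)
Your proposal is correct and follows exactly the paper's route: the paper's proof is the one-liner ``Choose $H=\{e\}$ and apply Theorem \ref{thm A}.'' The additional verifications you supply (triviality of $Ker$ for $H=\{e\}$, the isometric identification $PF^*_1(G:\{e\})=\ell^1(G)$ via $\|\lambda^1_G(f)\|=\|f\|_{\ell^1}$, and $\ell^\infty(G/\{e\})=\ell^\infty(G)$) are all accurate and simply make explicit the dictionary the paper leaves implicit in its introduction.
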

\begin{proof}
Choose $H=\{e\}$ and apply Theorem \ref{thm A}.
\end{proof}

\bibliographystyle{amsalpha}
\bibliography{mybibfile}
\end{document}